\documentclass[12pt,a4paper]{article}

\usepackage{amssymb}
\usepackage{amsmath}
\usepackage{indentfirst}
\usepackage{t1enc}
\usepackage{array}
\usepackage{graphpap}
\usepackage[mathscr]{eucal}
\usepackage{enumerate}
\usepackage{tabularx}
\usepackage{graphicx}
\usepackage[colorlinks=true,linkcolor=blue,citecolor=blue,
  urlcolor=blue]{hyperref}
\usepackage[hyphenbreaks]{breakurl}
\usepackage{amsthm}
\usepackage{ulem}

\usepackage{tikz} 
\usepackage{pgfplots} 
\usetikzlibrary{shapes,arrows}

\usepackage[latin2,utf8]{inputenc}
\usepackage{float}

\theoremstyle{plain}
\newtheorem{thm}{Theorem}

\newtheorem{lem}{Lemma}
\newtheorem{dfn}{Definition}

\newtheorem{cns}{Construction}

\newcommand{\mc}{\mathcal}

\newcommand{\ab}[1]{\left\vert{#1}\right\vert}
\newcommand{\zj}[1]{\left({#1}\right)}

\newcommand{\lb}[1]{\label{#1}}

\allowdisplaybreaks[1] 

\begin{document}

\title{On the pseudorandom properties of filtered Legendre symbol sequences using three polynomials}

\author{Katalin Gyarmati and Károly Müllner}
\date{}

\maketitle

\footnotetext{\noindent 2020 Mathematics Subject 
Classification: Primary: 11K45.\\
\indent Keywords and phrases: polynomials, binary sequences.\\
\indent Research supported by the Hungarian National Research Development and 
Innovation Fund KKP133819.}

\begin{abstract}
This paper presents a further development of a well-known construction that relies on polynomials and the Legendre symbol. To address theoretical security concerns regarding the original method, which used a single polynomial, we introduce a new approach that combines three different polynomials to generate the sequence, thereby enhancing its security. We prove that the sequences produced by this new construction also exhibit strong pseudorandom properties with respect to the pseudorandom measures introduced by Mauduit and Sárközy.
\end{abstract}  

\section{Introduction}

In 1997, Mauduit and Sárközy introduced the following quantitative
measures to study the pseudorandomness of finite binary sequences.

\begin{dfn}\lb{dfn01}
For a binary sequence
\[
E_N=\{e_1,\dots,e_N\}\in\{-1,+1\}^N,
\]
define
the \textit{well-distribution measure of} $E_N$ as
\[
W(E_N)=\max_{a,b,t}\ab{\sum_{j=1}^{t}e_{a+jb}},
\]
where the maximum is taken over all $a,b,t$ such that $a\in\mathbb Z$,
$b,t\in\mathbb N$ and $1\le a+b\le a+tb\le N$, while the \textit{correlation
measure of order $\ell$ of $E_N$} is defined as
\[
C_{\ell}(E_N)=\max_{M,D}\ab{\sum_{n=1}^{M} e_{n+d_1}
\dots e_{n+d_{\ell}}},
\]
where the maximum is taken over all $D=(d_1,\dots,d_{\ell})$ and $M$ such that
$0\le d_1<\dots <d_{\ell}<M+d_{\ell}\le N$.
\end{dfn}

These measures characterize the essential
random properties of binary sequences in
various applications (such as cryptography, Monte
Carlo methods, and many others). 
According to papers by Cassaigne, Mauduit, Sárközy \cite{CMS_On} and later
Alon, Kohayakawa, and Mauduit, \mbox{Moreira}, and R\"odl \cite{AKMMR},
the pseudorandomness of a sequence $E_n$ is considered to be very strong if
\begin{align*}
  W(E_N) &\ll N^{1/2}(\log N)^c,\\
  C_{\ell} (E_N) &\ll N^{1/2} (\log N)^{c_{\ell}}
\end{align*}
hold at least for small $\ell$'s. 
In various applications, pseudorandom constructions are of great importance.
Before Mauduit and Sárközy introduced the well-distribution and
correlation measures, these constructions were tested a posteriori.
This meant that after generation, computers were used to check
if certain statistical properties were satisfied.
According to the paper by Sárközy and Rivat \cite{RS},
and later a
paper of Mérai, Rivat and Sárközy \cite{MeRS} these
a posteriori tests can be avoided if the well-distribution
and correlation
measures of the sequence are small. Thus, it has become important to
construct sequences for which the $W$ and $C_k$ measures are
provably small.
In the literature, there are numerous constructions with
strong pseudorandomness properties (e.g., see
\cite{Ch},  \cite{CLX}, \cite{ChenW0}, \cite{GyPS}, 
\cite{L1}, \cite{L3}, \cite{LRS}, \cite{MRS2},
\cite{M4}, \cite{M3}, \cite{M1}, \cite{RS2}).
The following construction is perhaps the most natural and still the most
widely used today among all constructions proposed.

\begin{cns}[Hoffstein, Liemann]\lb{cns01}
    Let $p$ be an odd prime  and $f(x)\in\mathbb F_p[x]$ be a polynomial
   of degree $k$. Define $E_p = \{e_1,\dots,e_p\}\in\{-1,+1\}^p$ by:
\begin{equation*}
e_n=\left\{
\begin{array}{ll}
\zj{\frac{f(n)}{p}} & \textup{for } (f(n),p)=1,\\
+1 & \textup{for } p\mid f(n).
\end{array}
\right.
\end{equation*}
\end{cns}  

This construction was introduced by Hoffstein and Liemann, but nothing
has been proven about the pseudorandom properties of the sequences.
One year later, Goubin, Mauduit and Sárközy \cite{GMS} 
proved the following:

\bigskip
\textbf{Theorem A }[Goubin, Mauduit, S\'ark\"ozy]
  \textit{Let $p$ be an odd prime  and $f(x)\in\mathbb F_p[x]$ be a polynomial
  of degree $k$, which is not of the form
   $cq(x)^2$, where $c\in\mathbb F_p$, $q(x)\in\mathbb F_p [x]$.
   Define $E_p = \{e_1,\dots,e_p\}\in\{-1,+1\}^p$ by Construction \ref{cns01}.
 Then, }
\[
W(E_p)\le 10 kp^{1/2}\log p.
\]
\textit{Assume that one of the following three conditions for $\ell$,
which is the order of the correlation, holds:\\ 
(i) $\ell=2$;\\
(ii) $\ell<p$ and 2 is a primitive root modulo p;\\
(iii) $(4k)^{\ell}<p$.\\
Then, }
\[
C_{\ell}(E_p)\le 10 k\ell p^{1/2}\log p.
\]

\bigskip
Although Construction \ref{cns01} has strong pseudorandom properties, a potential weakness exists. It is conceivable that a future algorithm could determine
the polynomial $f$ from just $p^{\varepsilon}$ consecutive elements of the
sequence $E_p$, especially if the polynomial's degree is under a certain bound.
In this case, the entire sequence
(e.g., used as the secret key) might be determined
from a few elements of the sequence.
A solution to this problem could be to modify the sequence by using not one, but three different polynomials.
For this, we introduce the following construction:

\begin{cns}\lb{cns02}
    Let $p$ be an odd prime  and $f(x),g(x),h(x)\in\mathbb F_p[x]$ be three polynomials
    of degree $\le k$.
    Define $E_{f,g,h} = \{e_1,\dots,e_p\}\in\{-1,+1\}^p$ by:
\begin{equation}
e_n=\left\{
\begin{array}{ll}
  \zj{\frac{f(n)}{p}}, & \textup{if } \zj{\frac{h(n)}{p}}\in\{0,1\}
                         \textup{ and } p\nmid f(n)g(n)\\
  \zj{\frac{g(n)}{p}}, & \textup{if } \zj{\frac{h(n)}{p}}=-1
                         \textup{ and } p\nmid f(n)g(n)\\
  1, & \textup{if } p\mid f(n)g(n).\\
\end{array}
\right.\lb{haha}
\end{equation}
\end{cns}  

Note that if $f=g$, then this construction coincides with
Construction \ref{cns01}.

In this construction, if $p\nmid f(n)g(n)h(n)$, then the following
formula can be proved:
\begin{equation}
  e_n=\frac{1}{2}
  \zj{1+\zj{\frac{h(n)}{p}}}\zj{\frac{f(n)}{p}}
  +\frac{1}{2}
  \zj{1-\zj{\frac{h(n)}{p}}}\zj{\frac{g(n)}{p}}.
  \lb{form01}
\end{equation}
From this formula, we will prove the following 
using multiplicative character techniques (e.g., Weil
theorem):

\begin{thm}\lb{thm01}
  Let $p$ be an odd prime and $f(x),g(x),h(x)\in\mathbb F_p[x]$ be
  three polynomials
  of degrees between $1$ and $k$, that have no multiple roots. Also assume that
  \begin{equation}
    f(x)\nmid \prod_{t=1}^{p} g(x+t)h(x+t)\ \ \textup{ and }\ \ 
    g(x)\nmid \prod_{t=1}^{p} h(x+t).\lb{kep01}
  \end{equation}
  Define the sequence $E_{f,g,h} = \{e_1,\dots,e_p\}\in\{-1,+1\}^p$ by
  Construction \ref{cns02}. Then,
\begin{equation}
  W(E_{f,g,h})\le 10kp^{1/2}\log p.
  \lb{h1}
\end{equation}
Assume that one of the following three conditions for $\ell$,
which is the order of the correlation, holds:\\ 
(i) $\ell=2$;\\
(ii) $\ell<p$ and 2 is a primitive root modulo p;\\
(iii) $(4k)^{\ell}<p$.\\
Then,
\begin{equation}
C_{\ell}(E_{f,g,h})\le 2^{\ell+3}\ell k p^{1/2}\log p. \lb{h2}
\end{equation}
\end{thm}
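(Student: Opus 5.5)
Using \eqref{form01}, for every $n$ with $p\nmid f(n)g(n)h(n)$ we write
\[
e_n=\tfrac12\zj{\tfrac{f(n)}{p}}+\tfrac12\zj{\tfrac{f(n)h(n)}{p}}+\tfrac12\zj{\tfrac{g(n)}{p}}-\tfrac12\zj{\tfrac{g(n)h(n)}{p}} .
\]
The remaining $n$ are roots of $fgh$, at most $3k$ of them, and each contributes an error of at most $2$ to any sum we consider.

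\textbf{Well-distribution.} The sum $\sum_{j=1}^t e_{a+jb}$ splits into four sums of the form $\frac12\sum_j \zj{\frac{F(a+jb)}{p}}$, where $F\in\{f,fh,g,gh\}$. Each of these is treated exactly as in the proof of Theorem A: complete the sum by additive characters and apply the Weil bound for mixed character sums $\sum_x \chi(F(x))e(cx/p)$. This gives a bound of roughly $(\deg F)\,p^{1/2}\log p$ for each sum, provided $F$ is not a constant times a square. Here $f$ and $g$ are squarefree of positive degree, so they are fine. For $fh$ and $gh$, hypothesis \eqref{kep01} with $t=p$ gives $f\nmid gh$, hence $f\nmid h$; similarly $g\nmid h$. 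So some root of $f$ (resp.\ $g$) is not a root of $h$, and it is a simple root of $fh$ (resp.\ $gh$). Therefore $fh$ and $gh$ are not of the form $cq^2$. Since $\deg(fh)\le 2k$, the constants come out near $\frac12(k+2k+k+2k)\cdot(\text{Theorem A constant})$. I would redo the bookkeeping carefully to land on $10k$; this only needs the sharper form of the Weil estimate from \cite{GMS}.

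\textbf{Correlation.} Expand the product $e_{n+d_1}\cdots e_{n+d_\ell}$ using the four-term formula above. This gives at most $4^\ell$ terms, and the $\frac12$ factors leave $2^\ell$ terms in absolute weight. Each term is $\pm 2^{-\ell}\sum_n\zj{\frac{G(n)}{p}}$ with
\[
G(x)=\prod_{i=1}^\ell A_i(x+d_i),\qquad A_i\in\{f,fh,g,gh\},
\]
and $\deg G\le 2k\ell$. Hence
\[
\Big|\sum_{n=1}^M e_{n+d_1}\cdots e_{n+d_\ell}\Big|\le 2^\ell\max_G\Big|\sum_{n\le M}\zj{\tfrac{G(n)}{p}}\Big|+O(k\ell).
\]
The incomplete sum is bounded by Weil together with the completion technique, giving about $2k\ell p^{1/2}\log p$ whenever $G$ is not $c$ times a square. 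This yields \eqref{h2} with the factor $2^{\ell+3}$ absorbing the constants.

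\textbf{Main obstacle: showing each $G$ is not a constant times a square.} I would argue by ordering the shifts in the style of the reducibility lemma of \cite{GMS}. Under condition (iii), $(4k)^\ell<p$, and under (i) and (ii), the combinatorial lemma from \cite{GMS} applies. It produces an irreducible factor that occurs to an odd power among the shifted copies, provided a suitable base polynomial is not divisible by shifts of the others.

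The hypotheses \eqref{kep01} are designed for this. Suppose some $A_i$ contains $f$. Then an irreducible factor $\varphi$ of $f$ with $\varphi\nmid\prod_t g(x+t)h(x+t)$ exists. Every shift $\varphi(x+d)$ then divides only the $f$-parts of the product, so the $h$- and $g$-factors never cancel it. The problem reduces to the single-polynomial situation of Theorem A for $f$ with shifts $\{d_i: f\mid A_i\}$, where the \cite{GMS} lemma provides an odd multiplicity.

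If no $A_i$ contains $f$, all $A_i\in\{g,gh\}$. The second condition of \eqref{kep01} gives an irreducible $\psi\mid g$ coprime to all shifts of $h$, and the same argument applies to the shifts of $g$.

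The delicate point is verifying that the lemma from \cite{GMS} applies to the subset of shifts under each of (i)--(iii). For (i), a direct argument handles it. For (ii), use the primitive-root-of-$2$ argument on $\varphi$. For (iii), use the counting argument with $(4k)^\ell<p$. I expect this to go through unchanged because only the factor $\varphi$ (of degree $\le k$) is involved.
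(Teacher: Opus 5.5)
Your proposal is correct. Its skeleton matches the paper's: the four-term expansion from \eqref{form01}, an error of at most $6k$ resp.\ $6k\ell$ from roots of $fgh$, Weil-type bounds, and the reduction of the non-square question to the shift class of an irreducible $\varphi\mid f$ dividing no shift of $gh$ (resp.\ $\psi\mid g$ dividing no shift of $h$). The real difference is how you finish the non-square lemma.

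You apply the sumset lemma of Goubin, Mauduit and S\'ark\"ozy directly to $\mathcal A=\{a:\varphi(x+a)\mid f\}$ and $\mathcal B=\{d_i: f\mid A_i\}$. Since $f$ is squarefree and no shift of $\varphi$ divides a shift of $g$ or $h$, the multiplicity of $\varphi(x+c)$ in $G$ is exactly the number of representations $c=a+b$. Conditions (i)--(iii) pass to these sets because $|\mathcal A|\le k$ and $|\mathcal B|\le \ell$; you may assume $k<p$, since otherwise \eqref{h2} is trivial.

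The paper instead uses Theorem A purely as a black box. It restricts to $\ell k<p^{1/2}/(13\log p)$ and argues that if $\prod_r\overline f(x+\widetilde d_r)$ were $cq^2$, the Legendre sequence of $\overline f$ would have an order-$i$ correlation sum of size about $p$, contradicting Theorem A.

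What each route buys:
\begin{itemize}
\item The paper's route avoids quoting and rechecking the internal combinatorial lemma. The price is the trivial-range reduction and the need for a correlation sum of length close to $p$. Strictly speaking that sum wraps around modulo $p$, which $C_i$ does not permit, so the shifts have to be rotated first.
\item Your route is purely algebraic and exhibits the odd-multiplicity factor explicitly. It also shows that the paper's Case III (only shifts of $h$) never occurs, since every $A_i\in\{f,fh,g,gh\}$ contains $f$ or $g$.
\end{itemize}

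For \eqref{h1}, you are right that simply scaling Theorem A's constant would overshoot, giving $30kp^{1/2}\log p$. Lemma~\ref{lem01} with $s\le k,k,2k,2k$ gives $3kp^{1/2}(1+\log p)+6k<10kp^{1/2}\log p$, exactly as in the paper. Your simple-root argument that $fh$ and $gh$ are not $cq^2$ is fine and slightly cleaner than the paper's.
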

For reasons of symmetry, the theorem also holds even if condition
\eqref{kep01} is replaced by 
$g(x)\nmid \prod_{t=1}^{p} f(x+t)h(x+t)$ and
$f(x)\nmid \prod_{t=1}^{p} h(x+t)$.

\bigskip\noindent\textbf{Security considerations.}
The construction $E_{f,g,h}$ was specifically designed to counter potential attacks aimed at reconstructing the underlying polynomial of a Legendre symbol sequence. In the original construction proposed by Goubin, Mauduit, and Sárközy \cite{GMS}, a single polynomial $f(n)$ is used, and the sequence elements are defined as $e_n = \left(\frac{f(n)}{p}\right)$. If an attacker can determine the values of $e_n$ for a sufficient number of indices (roughly $\deg(f)$ values), they might be able to reconstruct the polynomial $f(x)$ using interpolation or other specialized algebraic algorithms.

In our proposed construction $E_{f,g,h}$, the attacker faces a significantly more complex task due to the following factors:

\begin{itemize}
    \item \textbf{Triple Uncertainty:} The attacker does not know which of the three polynomials ($f, g,$ or $h$) is responsible for a given element $e_n$. To even begin a reconstruction, one would first need to distinguish which indices $n$ satisfy $\left(\frac{h(n)}{p}\right) = 1$ and which satisfy $\left(\frac{h(n)}{p}\right) = -1$.
    \item \textbf{Hidden Switching:} The ``switching'' polynomial $h(n)$ is itself hidden. Since $e_n$ only reveals information about $f(n)$ or $g(n)$, the values of the Legendre symbol $\left(\frac{h(n)}{p}\right)$ are not directly observed. This adds an additional layer of protection, as the selector sequence is not public.
    \item \textbf{Combinatorial Explosion:} If an attacker attempts to guess the partition of $N$ observed values into those coming from $f(n)$ and those from $g(n)$, they encounter a combinatorial explosion. For $N$ observed elements, there are $2^N$ possible assignments. Without knowing the correct assignment, standard interpolation techniques for $f$ and $g$ cannot be applied effectively.
\end{itemize}

Thus, the interlacing of three polynomials ensures that even if a small number of values are leaked, the underlying algebraic structure remains computationally difficult to recover. This provide a significant security upgrade over the single-polynomial case while maintaining the same level of pseudorandomness.

\bigskip
Thus, even though Construction \ref{cns02}
is slightly more complicated than Construction \ref{cns01},
it still provides strong
bounds for the pseudorandom measures.

It is also clear that condition \eqref{kep01} cannot be completely
dropped from Theorem \ref{thm01}. For instance, if  
$f= h$ and $g= nh$, where $n\in\mathbb F_p$ is a quadratic non-residue,
then the elements of the sequence
$E_{f,g,h}$ are all $1$.
The strength of Construction \ref{cns02} will be further supported by numerical
calculations, and we will compare the pseudorandom measures of some sequences
in Constructions \ref{cns01} and \ref{cns02}.

At first glance, checking condition \eqref{kep01} may be inconvenient.
However, for small primes $p$, it does not require extensive computation and can be done
using polynomial division.
There are several ways to avoid this polynomial division,
one of which is to use only irreducible polynomials of the form
\[
x^{r}+a_{r-2}x^{r-2}+a_{r-3}x^{r-3}+\dots+a_1x+a_0  
\]
(so the coefficient of $x^{r-1}$ is $0$).
Another possibility is to choose $f$, $g$, and $h$ to all be products of
second-degree irreducible polynomials.
We will prove the following
\begin{thm}\lb{thm02}
  Let $p$ be prime, and $\mc A$, $\mc B$, $\mc C$
  be sets containing only quadratic non-residues modulo $p$
  for which
  \[
    \mc{A}\not\subseteq \mc{B}\cup \mc{C}\ \ \textit{ and }
    \ \  \mc{B}\not\subseteq \mc{C}.
  \]
  The polynomials $f$, $g$, and $h$ are defined as follows.
  \begin{align}
    f(x)=\prod_{n\in \mc{A}} (x^2-n),\
    g(x)=\prod_{n\in \mc{B}} (x^2-n),\
    h(x)=\prod_{n\in \mc{C}} (x^2-n).\lb{xy1}
  \end{align}  
 Define the sequence $E_{f,g,h} = \{e_1,\dots,e_p\}\in\{-1,+1\}^p$ by Construction \ref{cns02}.
 Then,
\begin{align}
  W(E_{f,g,h})&\le 10kp^{1/2}\log p\lb{xy2}\\
  C_{\ell}(E_{f,g,h})&\le 2^{\ell+3}\ell k p^{1/2}\log p.\lb{xy3}
\end{align}
\end{thm}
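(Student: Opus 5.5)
The plan is to reduce Theorem \ref{thm02} to Theorem \ref{thm01}. I will check that the polynomials in \eqref{xy1} satisfy all hypotheses of Theorem \ref{thm01}, including condition \eqref{kep01}. Here $k$ should be read as $2\max(|\mc A|,|\mc B|,|\mc C|)$, the maximal degree. The sets are nonempty: $\mc A\not\subseteq \mc B\cup\mc C$ forces $\mc A\neq\emptyset$, and $\mc B\not\subseteq\mc C$ forces $\mc B\neq\emptyset$. If $\mc C=\emptyset$, then $h=1$ has degree $0$. I will treat that case separately, since there $e_n=(f(n)/p)$ whenever $p\nmid fg$, and Theorem A applies up to $O(k)$ exceptional terms. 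Alternatively, I will observe that the proof of Theorem \ref{thm01} only uses the no-multiple-roots property and \eqref{kep01}.

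\textbf{First step (no multiple roots).} Since $n$ is a quadratic non-residue, $x^2-n$ is irreducible over $\mathbb F_p$. Its roots are $\pm\sqrt n\in\mathbb F_{p^2}$, which are distinct because $n\neq 0$ and $p$ is odd. Different $n$ give different polynomials $x^2-n$, hence disjoint root sets. So $f,g,h$ are squarefree products of distinct irreducible quadratics.

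\textbf{Second step (condition \eqref{kep01}).} Pick $n_0\in\mc A\setminus(\mc B\cup\mc C)$. I claim $x^2-n_0\nmid \prod_{t=1}^p g(x+t)h(x+t)$, which implies $f\nmid \prod_t g(x+t)h(x+t)$. Suppose the claim fails. Since $x^2-n_0$ is irreducible, it would divide some factor $(x+t)^2-m$ with $m\in\mc B\cup\mc C$. Both polynomials are monic irreducible quadratics, so they would be equal:
\[
x^2-n_0=x^2+2tx+t^2-m.
\]
Comparing the $x$-coefficients gives $2t=0$, so $t\equiv 0\pmod p$, that is $t=p$. Then $n_0=m\in\mc B\cup\mc C$, a contradiction. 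The same argument with $n_1\in\mc B\setminus\mc C$ gives $g\nmid\prod_t h(x+t)$. This vanishing $x$-coefficient is exactly the ``coefficient of $x^{r-1}$ is $0$'' mechanism mentioned before the theorem.

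\textbf{Third step (conclusion).} Theorem \ref{thm01} now yields \eqref{xy2} and \eqref{xy3}. For \eqref{xy3}, the correlation bound needs one of the conditions (i)--(iii) on $\ell$, which Theorem \ref{thm02} omits as stated. I would either carry these over implicitly, or check whether the special shape of the polynomials lets the Weil-type argument avoid them. That would require showing directly that the relevant product $\prod_i \varphi_i(x+d_i)$ is never a constant times a square. Since every irreducible factor here has the form $(x+d)^2-n$, and such factors determine $(d,n)$ uniquely, one may hope a counting argument works for all $\ell<p$.

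The main obstacle I expect is this correlation step without conditions (i)--(iii), together with the degenerate case $h$ constant. The verification of \eqref{kep01} itself is routine.
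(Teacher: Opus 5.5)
Your verification that $f,g,h$ are squarefree and satisfy \eqref{kep01} is the paper's own argument: the same $n_1\in\mc A\setminus(\mc B\cup\mc C)$, the same comparison of $x$-coefficients. Through Theorem \ref{thm01} it gives \eqref{xy2}.

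The gap is \eqref{xy3}. Theorem \ref{thm02} claims the correlation bound with none of the conditions (i)--(iii). Reducing to Theorem \ref{thm01} cannot deliver that, because those conditions enter Theorem \ref{thm01} only through the appeal to Theorem A in the proof of Lemma \ref{lemmac}. Carrying them over proves a weaker statement. The direct non-square check you leave as a ``hope'' and call the main obstacle is exactly what the paper's proof supplies in place of Theorem A, and it is a short multiplicity count.

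Expand the product as in \eqref{qq5}. Each of the four summands of $e_n$ contains exactly one of $(f(n)/p)$, $(g(n)/p)$. So every polynomial that occurs has the form $F=\prod_{r\le i}f(x+\widetilde d_r)\prod_{r\le v}g(x+d'_r)\prod_{s\le z}h(x+d''_s)$ with $i+v=\ell\ge 1$, and the shifts of each type are pairwise distinct in $[0,p-1]$.

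Suppose $i\ge 1$, and consider the irreducible $(x+\widetilde d_1)^2-n_1$. A factor $(x+d)^2-m$ of any shifted $f$, $g$ or $h$ equals it only if $2d\equiv 2\widetilde d_1$, i.e.\ $d=\widetilde d_1$ since $p$ is odd, and $m=n_1$.
\begin{itemize}
\item This rules out every $g$- and $h$-factor, since $n_1\notin\mc B\cup\mc C$.
\item It rules out every $f(x+\widetilde d_r)$ with $r\neq 1$.
\item The factor $f(x+\widetilde d_1)$ contains it exactly once.
\end{itemize}
So $F$ has an irreducible factor of multiplicity one, hence $F\neq cq^2$. If $i=0$, then $v\ge 1$ and $n_2\in\mc B\setminus\mc C$ works the same way.

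Lemma \ref{lem01} with $s\le\deg F\le 2\ell k$ then gives $\ab{\sum_{n\le M}\zj{\frac{F(n)}{p}}}<2\ell k p^{1/2}(1+\log p)$. Now \eqref{bp} yields \eqref{xy3} for every $\ell$, with Theorem A never used. This also removes your worry about a constant $h$: the argument only uses the $f$- or $g$-factor that every $F$ contains, and the $W$-bound argument goes through with $h\equiv 1$.
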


\bigskip\noindent\textbf{Remark.} The sequence 
$E_{f,g,h}$ given in Theorem \ref{thm02} is symmetric, as $f$, $g$
and $h$ are even polynomials $(f(x)=f(-x),\ g(x)=g(-x),\ h(x)=h(-x))$.
This implies that
for every element of the sequence $e_n=e_{p-n}$ holds.
Thus, for applications of the sequence given in Theorem \ref{thm02}, we suggest using at most the first $(p+1)/2$ elements.

\bigskip The analysis of Construction \ref{cns02} prompted us to consider
the generalizability of its underlying principle:  generating a sequence
by combining elements from multiple source sequences. This line of
questioning led to the development of the following construction:

\begin{cns}\lb{cns03}
  Let $\mc F\subset \{-1,+1\}^N$ be a large family of binary sequences. By taking binary sequences $F_N=\{f_1,f_2,\dots,f_N\}$, $G_N=\{g_1,g_2,\dots,g_N\}$
and $H_N=\{h_1,h_2,\dots,h_N\}\in\mc F$,
we can define a new sequence $E_N=\{e_1,e_2,\dots,e_N\}\in\{-1,+1\}^N$
with the following formula:
\begin{equation}
e_n=\left\{
\begin{array}{ll}
f_n, & \textup{if } h_n=1\\
g_n, & \textup{if } h_n=-1.
\end{array}
\right.\lb{tq1}
\end{equation}

\end{cns}
Similarly to Construction \ref{cns02}, there are special cases where the new sequence has weak pseudorandom properties. For example, if for every  $n$,
$f_n=h_n$ and $g_n=-h_n$,
then all elements of our sequence are $1$. To avoid such extreme cases, we must assume something about the large family $\mc F$ involved in the construction. For this, we will need the so-called cross-correlation measure. This
family measure was introduced by Mauduit, Sárközy, and
the first author of the present paper in \cite{GyMScrossi}.

\begin{dfn}
Let $N\in\mathbb N$, $\ell\in\mathbb N$, and for any $\ell$
binary sequences $E_N^{(1)},\dots,E_N^{(\ell)}$ with
\[
E_N^{(i)} =
\zj{e_1^{(i)},\dots,e_N^{(i)}}\in\{-1,+1\}^N
\ \textup{(for }i=1,2,\dots,\ell\textup{)}
\]
and any $M\in\mathbb N$ and $\ell$-tuple $D=(d_1,\dots,d_\ell)$
of non-negative integers with 
\begin{equation}
0\le d_1\le\dots\le d_\ell<M+d_\ell\le N,
\lb{a05}
\end{equation}
write 
\begin{equation}
V_\ell \zj{E_N^{(1)},\dots,E_N^{(\ell)}, M,D}
=\sum_{n=1}^{M} 
e_{n+d_1}^{(1)}\cdots e_{n+d_\ell}^{(\ell)}
\lb{a06}
\end{equation}
Let
\begin{equation}
\tilde{C}_\ell \zj{E_N^{(1)},\dots,E_N^{(\ell)}}
=\max_{M,D} \ab{V_\ell \zj{E_N^{(1)},\dots,E_N^{(\ell)}, M,D}}
\lb{a07}
\end{equation}
where the maximum is taken over all $D=(d_1,\dots,d_\ell)$
and $M\in\mathbb N$ satisfying \eqref{a05} with the 
additional restriction that 
if $E_N^{(i)}=E_N^{(j)}$ for some $i\ne j$, then we must not have 
$d_i=d_j$. Then the \underline{cross-correlation measure of order $\ell$}
of the family $\mc F$ of binary sequences $E_N\in\{-1,+1\}^N$ is defined 
as 
\begin{equation}
\Phi_\ell (\mc F)=\max \tilde{C}_\ell
\zj{E_N^{(1)},\dots,E_N^{(\ell)}}
\lb{a08}
\end{equation}
where the maximum is taken over all $\ell$-tuples of binary 
sequences $\zj{E_N^{(1)},\dots, E_N^{(\ell)}}$ with 
\begin{equation*}
E_N^{(i)}\in\mc F\textup{ for }i=1,\dots,\ell.
\end{equation*}
\end{dfn}

Then we will prove the following: 

\begin{thm}\lb{thm03}
Let  $\mc F\subset \{-1,+1\}^N$ be a large family of binary sequences.
For three distinct sequences $F_N,G_N,H_N\in \mc F$ define the sequence
$E_N\in\{-1,+1\}^N$ by Construction \ref{cns03}. 
For this new sequence, we have 
\begin{align*}
C_{\ell} (E_N) &\le  2^{\ell} \max_{\ell \le k\le 2\ell}\Phi_{k} (\mc F).
\end{align*}
\end{thm}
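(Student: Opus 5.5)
The starting point is the binary analogue of \eqref{form01}. Since $h_n\in\{-1,+1\}$, Construction \ref{cns03} gives, for every $n$,
\[
e_n=\frac{1}{2}(1+h_n)f_n+\frac{1}{2}(1-h_n)g_n=\frac12\left(f_n+g_n+h_nf_n-h_ng_n\right).
\]
Fix $M$ and $0\le d_1<\dots<d_\ell<M+d_\ell\le N$. Substitute this identity into each factor of $\sum_{n=1}^{M}e_{n+d_1}\cdots e_{n+d_\ell}$ and expand the product. This yields $4^\ell$ terms, each with coefficient $\pm 2^{-\ell}$. Each term is a sum
\[
\sum_{n=1}^M \prod_{i=1}^{\ell} u^{(i)}_{n+d_i}v^{(i)}_{n+d_i}.
\]
Here for each $i$ we have $(u^{(i)},v^{(i)})$ equal to one of $(f,\cdot)$, $(g,\cdot)$, $(h,f)$ or $(h,g)$, where $\cdot$ means the factor is absent. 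So each term is a cross-correlation sum involving between $\ell$ and $2\ell$ sequences from $\{F_N,G_N,H_N\}\subset\mc F$, with shifts taken from the multiset of $d_i$'s.

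The main step is to check that every such term is admissible in the definition of $\tilde C_k$ for some $k\in[\ell,2\ell]$, so that it is bounded by $\Phi_k(\mc F)$. Each sequence $E^{(j)}$ in the term carries a shift, and the restriction in \eqref{a05}–\eqref{a07} only forbids the same sequence appearing twice with the same shift. Since the $d_i$ are distinct, a repeated shift can only come from a single index $i$ whose factor is $h_{n+d_i}f_{n+d_i}$ or $h_{n+d_i}g_{n+d_i}$. Those two sequences are distinct, because $F_N,G_N,H_N$ are assumed distinct, so the restriction is satisfied. The same sequence at two different indices $i\ne i'$ has different shifts $d_i\ne d_{i'}$, which is also fine. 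The shifts must then be sorted in nondecreasing order, with ties only between distinct sequences, so that they satisfy \eqref{a05}. This is legitimate because $V_k$ is invariant under simultaneously permuting the sequences and their shifts. The condition $M+\max d\le N$ is inherited from the original tuple, and $k$, the number of factors, lies between $\ell$ and $2\ell$.

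Hence each of the $4^\ell$ terms has absolute value at most $\max_{\ell\le k\le 2\ell}\Phi_k(\mc F)$. Summing over the terms gives
\[
\ab{\sum_{n=1}^M e_{n+d_1}\cdots e_{n+d_\ell}}\le 2^{-\ell}\cdot 4^\ell\max_{\ell\le k\le2\ell}\Phi_k(\mc F)=2^\ell\max_{\ell\le k\le2\ell}\Phi_k(\mc F).
\]
Taking the maximum over $M$ and $D$ proves the theorem. The only delicate point is the admissibility check above, specifically that no forbidden coincidence of equal sequence with equal shift occurs. This is exactly where the distinctness of $F_N,G_N,H_N$ is used, and why the extreme example $f_n=h_n$, $g_n=-h_n$ from the text is excluded.
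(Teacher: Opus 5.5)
Your proposal is correct and follows essentially the same route as the paper: it uses the identity $e_n=\frac12(f_n+g_n+f_nh_n-g_nh_n)$ and expands into $4^\ell$ cross-correlation sums with between $\ell$ and $2\ell$ factors, each bounded by $\max_{\ell\le k\le 2\ell}\Phi_k(\mathcal F)$, which gives the factor $4^\ell/2^\ell=2^\ell$. Your explicit admissibility check fills in what the paper only asserts ``by the definition of the cross-correlation measure'': equal shifts occur only for the distinct pairs $(h,f)$ or $(h,g)$ at a single $d_j$, and every $d_j$ still appears, so $M+d_\ell\le N$ is preserved.
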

This theorem is particularly useful when the sequences $F_N,G_N,H_N$
are chosen from a large family $\mc F\subset\{-1,+1\}^N$ for which
$\max_{1 \le k\le 2\ell}\Phi_k(\mc F)\ll N^{1/2+\varepsilon}$. Such families
can be found, for example, in \cite{DoganSahin}, \cite{Gyarmati2018},
\cite{GyMScrossi}, \cite{Liu2024}, and \cite{WinterhofOguz2016}.

\section{Numerical Calculations}

While Theorems \ref{thm02} and \ref{thm03} provide rigorous theoretical upper bounds for the pseudorandom measures, these bounds are derived using the Weil theorem and often include large constants.
The primary objective of this section is to demonstrate that the actual pseudorandom measures of our construction are significantly smaller than the theoretical upper bounds derived from the Weil theorem. 
Regarding the family of sequences, we note that the construction $E_{f,g,h}$ allows for a large variety of sequences by choosing different triples of polynomials. While the detailed analysis of the cross-correlation measure of such a family is a challenging problem and lies beyond the scope of the present paper, the structure of the construction suggests that sequences generated by different polynomials will remain nearly orthogonal.

\bigskip
In the following, we determine and compare the measures
$W$ and $C_2$
for a few specific triples of polynomials $(f,g,h)$ and prime numbers $p$.
The sequences $E_f,\ E_g$ and $E_h$
are defined according to
Construction \ref{cns01} using specifically given polynomials $f$, $g$,
and $h$ in place of $f$ in \eqref{haha}. Meanwhile, $E_{f,g,h}$
is the sequence from Construction \ref{cns02}.
Thus, here are the four tables of results:

\bigskip
\begin{samepage}
\noindent\textbf{Example 1.}
Let the three polynomials be defined by
\[
  f(x) = x^2 + 1,\  g(x) = x^2 + 3x + 1,\ h(x) = x^3 - 1.
\]
Then, 
\begin{table}[H]
  \centering
\scalebox{0.84}[0.9]{
\begin{tabular}{|c|c|c|c|c|c|c|c|c|}
\hline
$p$ & $W(E_f)$ & $W(E_g)$ & $W(E_h)$ & $W(E_{f,g,h})$ & $C_2(E_f)$ & $C_2(E_g)$ & $C_2(E_h)$ & $C_2(E_{f,g,h})$ \\ 
\hline
2003 & 50 & 55 & 53 & 59 & 177 & 182 & 136 & 122 \\ 
3001 & 51 & 108 & 129 & 60 & 174 & 194 & 138 & 183 \\ 
4001 & 139 & 151 & 102 & 151 & 247 & 273 & 200 & 192 \\ 
5003 & 67 & 86 & 90 & 92 & 348 & 264 & 190 & 269 \\ 
6007 & 106 & 84 & 97 & 116 & 292 & 348 & 274 & 237 \\ 
\hline
\end{tabular}}
\end{table}
\end{samepage}

\bigskip
\begin{samepage}
\noindent\textbf{Example 2.}
Let the three polynomials defined by 
\[
  f(x) = x^2 + x + 1,\ g(x) = x^3 - x + 1\textup{ \ and \ }
  h(x) = x^4 + x - 1.
\]
Then, 
\begin{table}[H]
\centering
\scalebox{0.84}[0.9]{
\begin{tabular}{|c|c|c|c|c|c|c|c|c|}
\hline
$p$ & $W(E_f)$ & $W(E_g)$ & $W(E_h)$ & $W(E_{f,g,h})$ & $C_2(E_f)$ & $C_2(E_g)$ & $C_2(E_h)$ & $C_2(E_{f,g,h})$ \\ 
\hline
2003 & 48 & 66 & 72 & 56 & 169 & 136 & 139 & 117 \\ 
3001 & 75 & 133 & 51 & 184 & 203 & 147 & 152 & 172 \\ 
4001 & 187 & 45 & 192 & 110 & 266 & 189 & 206 & 159 \\ 
5003 & 72 & 101 & 81 & 79 & 274 & 209 & 220 & 195 \\ 
6007 & 147 & 124 & 131 & 104 & 294 & 211 & 204 & 226 \\ 
\hline
\end{tabular}}
\end{table}
\end{samepage}

\bigskip
\begin{samepage}
\noindent\textbf{Example 3.}
Let the three polynomials defined by 
\[
f(x) = x^4 - 1,\ g(x) = x^6 - 4x^3 + 3,\textup{ \ and \ }
h(x) = x^3 -6x^2 + 15x - 14.
\]
Then, 
\begin{table}[H]
\centering
\scalebox{0.84}[0.9]{ 
\begin{tabular}{|c|c|c|c|c|c|c|c|c|}
\hline
$p$ & $W(E_f)$ & $W(E_g)$ & $W(E_h)$ & $W(E_{f,g,h})$ & $C_2(E_f)$ & $C_2(E_g)$ & $C_2(E_h)$ & $C_2(E_{f,g,h})$ \\ 
\hline
2003 & 51 & 47 & 53 & 54 & 178 & 121 & 164 & 105 \\ 
3001 & 186 & 80 & 132 & 146 & 254 & 181 & 171 & 190 \\ 
4001 & 212 & 98 & 54 & 170 & 217 & 210 & 171 & 200 \\ 
5003 & 62 & 69 & 84 & 70 & 281 & 165 & 183 & 244 \\ 
6007 & 88 & 149 & 130 & 74 & 293 & 192 & 173 & 231 \\ 
\hline
\end{tabular}}
\end{table}
\end{samepage}

\bigskip
\begin{samepage}
\noindent\textbf{Example 4.}
Let the three polynomials defined by 
\[
f(x) = x^2 - 1,\ g(x) = x^3 + x^2 + 1 \textup{ \ and \ }
h(x) = x^4 + x^3 + 1.
\]
Then, 
\begin{table}[H]
\centering
\scalebox{0.84}[0.9]{
\begin{tabular}{|c|c|c|c|c|c|c|c|c|}
\hline
$p$ & $W(E_f)$ & $W(E_g)$ & $W(E_h)$ & $W(E_{f,g,h})$ & $C_2(E_f)$ & $C_2(E_g)$ & $C_2(E_h)$ & $C_2(E_{f,g,h})$ \\ 
\hline
2003 & 35 & 64 & 55 & 62 & 201 & 126 & 125 & 132 \\ 
3001 & 120 & 62 & 78 & 94 & 206 & 178 & 157 & 166 \\ 
4001 & 198 & 185 & 118 & 109 & 236 & 187 & 225 & 213 \\ 
5003 & 74 & 77 & 79 & 83 & 276 & 223 & 233 & 202 \\ 
6007 & 126 & 83 & 142 & 115 & 348 & 226 & 242 & 268 \\ 
\hline
\end{tabular}}
\end{table}
\end{samepage}

We found that in every case we studied, we have
\begin{align*}
  W(E_{f,g,h})&\le 2\max\{W(E_f),W(E_g),W(E_h)\}
  \intertext{and}
  C_2(E_{f,g,h})&\le 2\max\{C_2(E_f),C_2(E_g),C_2(E_h)\}.
\end{align*}

The computational results demonstrate that, apart from a few exceptional cases, the sequences  $E_{f,g,h}$
have pseudorandom properties almost as good as those of the sequences from the original Construction \ref{cns01}.  It is conjectured that these constructions are cryptographically secure, as recovering the full sequence from partial knowledge would necessitate computationally expensive algorithms.

\section{Proofs}

\bigskip\noindent\textbf{Proof of Theorem \ref{thm01}}.
Consider a triple of numbers $a,b,t$ for which
\begin{align}
  W(E_{f,g,h}) = \ab{\sum_{j=1}^{t} e_{a+jb}}. \lb{qq1}
\end{align}
Let $\mc L$ be the following set:
\begin{align*}
 \mc L=\{n:\ p\mid f(n)g(n)h(n)\}.
\end{align*}
Since the degrees of the polynomials $f$, $g$, and $h$ are $\le k$, 
we have $\ab{\mc L}\le 3k$.
Furthermore, by \eqref{form01}, \eqref{qq1} and the triangle inequality,
\begin{align}
  W(E_{f,g,h})
  &\resizebox{0.9\linewidth}{!}{\mbox{$\le\ab{\sum\limits_{j=1}^{t} \frac{1}{2}
    \left(1 + \zj{\frac{h(a+jb)}{p}}\right) \left(\frac{f(a+jb)}{p}\right) + \frac{1}{2} \left(1 - \zj{\frac{h(a+jb)}{p}}\right)
    \left(\frac{g(a+jb)}{p}\right)}$}}
  \notag\\
  &+ 2 \sum_{a+jb \in \mathcal{L}} 1\notag\\
  &\le \frac{1}{2} \left|\sum_{j=1}^{t}
    \zj{\frac{f(a+jb)}{p}}\right|
    + \frac{1}{2} \left|\sum_{j=1}^{t} \zj{\frac{g(a+jb)}{p}}\right|\notag\\
    &+ \frac{1}{2} \left|\sum_{j=1}^{t} \zj{\frac{f(a+jb)h(a+jb)}{p}}\right|
    +\frac{1}{2} \left|\sum_{j=1}^{t} \zj{\frac{g(a+jb)h(a+jb)}{p}}\right|
    + 6k\lb{qq2}
\end{align}

The theorem of Weil \cite{Weil} on character sums and polynomials can be extended to incomplete sums using the Vinogradov method. In this
extended theorem, Winterhof \cite{Winterhof} optimized the value of
the constant factor, proving the following:

\begin{lem}\lb{lem01}
Suppose that $p$ is a prime, $\chi$ is a non-principal character modulo
$p$ of order $d$, $f \in \mathbb{F}_{p}[x]$ has
$s$ distinct roots in $\overline{\mathbb{F}}_{P}$,
and it is not a constant
multiple of the $d$-th power of a polynomial over $\mathbb{F}_{p}$.
Let $y$ be a real number with  $0 < y \le p$. Then for any $x \in \mathbb{R}$.
\[
\left|\sum_{x<n\le x+y}\chi(f(n))\right|<sp^{1/2}(1+\log p). 
\]
\end{lem}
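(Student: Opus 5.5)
The plan is the standard completion argument: reduce the incomplete sum to complete mixed character sums and apply Weil's bound. Write $e_p(u)=e^{2\pi i u/p}$ and let $I=\{n\in\mathbb Z:\ x<n\le x+y\}$. Since $y\le p$, the set $I$ has at most $p$ elements, and they are distinct modulo $p$; we view them as residues. For $n\in\mathbb F_p$, orthogonality of additive characters gives
\[
\mathbf 1_{I}(n)=\frac1p\sum_{a\in\mathbb F_p}\sum_{m\in I}e_p\bigl(a(n-m)\bigr).
\]
Inserting this into the sum yields
\[
\sum_{n\in I}\chi(f(n))=\frac1p\sum_{a\in\mathbb F_p}\Bigl(\sum_{m\in I}e_p(-am)\Bigr)\Bigl(\sum_{n\in\mathbb F_p}\chi(f(n))\,e_p(an)\Bigr).
\]
This separates the sum into a geometric sum over the interval times a complete mixed character sum.

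Next I bound the complete sums with Weil's theorem \cite{Weil}. Because $f$ is not a constant multiple of a $d$-th power and $\chi$ has order $d$, the function $\chi(f(n))$ is a genuinely nontrivial multiplicative character of a polynomial.
\begin{itemize}
\item For $a=0$, the pure sum satisfies $\bigl|\sum_n\chi(f(n))\bigr|\le (s-1)p^{1/2}$.
\item For $a\ne0$, the mixed Weil bound (multiplicative character of $f$ twisted by the additive character of the linear polynomial $an$) gives $\bigl|\sum_n\chi(f(n))e_p(an)\bigr|\le s\,p^{1/2}$. Here $s$ counts the distinct roots of $f$, and the linear additive part contributes degree $1$.
\end{itemize}
The inner geometric sums are bounded by $\min\bigl(y,\ 1/(2\|a/p\|)\bigr)$, where $\|\cdot\|$ is the distance to the nearest integer. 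In particular the $a=0$ term contributes at most $\frac{y}{p}(s-1)p^{1/2}\le (s-1)p^{1/2}$.

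Collecting the terms, and representing each $a\neq 0$ by $0<|a|<p/2$, I would obtain
\[
\Bigl|\sum_{n\in I}\chi(f(n))\Bigr|\le (s-1)p^{1/2}+s\,p^{1/2}\sum_{0<|a|<p/2}\frac{1}{2|a|}
=(s-1)p^{1/2}+s\,p^{1/2}\sum_{a=1}^{(p-1)/2}\frac1a .
\]
The last step is the elementary estimate $\sum_{a\le (p-1)/2}1/a\le \log p+\tfrac{1}{2}$ or similar. This gives a total below $s\,p^{1/2}(1+\log p)$.

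The main obstacle is getting the constant exactly as stated, namely strict inequality with factor $1+\log p$. Two places need care. First, one should use the sharper bound $|\sum_{m\in I}e_p(-am)|\le 1/|2\sin(\pi a/p)|$ together with $\sin(\pi u)\ge 2u$ for $0\le u\le 1/2$. Second, the harmonic sum should be estimated via $\sum_{a\le (p-1)/2}1/a<\log p$ for $p\ge3$, which absorbs the $-1$ in the $a=0$ term. This is precisely the optimization carried out by Winterhof \cite{Winterhof}; if the crude constants do not close, I would follow his smoothing of the interval indicator to save the extra additive constant.
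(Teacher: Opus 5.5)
Your argument is correct and follows the route the paper indicates. The paper gives no proof of its own: it quotes the lemma from Winterhof as Weil's bound extended to incomplete sums by Vinogradov's completion method, which is exactly what you do. Your bound $\bigl|\sum_{n\in I}\chi(f(n))\bigr|\le (s-1)p^{1/2}+s\,p^{1/2}\sum_{a=1}^{(p-1)/2}1/a$, combined with $\sum_{a=1}^{(p-1)/2}1/a<\log p$ (which follows from $1/a<\int_{a-1/2}^{a+1/2}dt/t$), already gives the strict inequality, so no smoothing is needed.

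One slip in your final paragraph: the geometric sum is bounded by $1/|\sin(\pi a/p)|$, not by $1/|2\sin(\pi a/p)|$ (take $|I|=1$ and $a$ near $p/2$). Combined with $\sin(\pi u)\ge 2u$, the correct bound gives exactly the $p/(2|a|)$ that your main display actually uses, so the main estimate is unaffected.
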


Since none of the polynomials $f,g$ and $h$ have multiple roots, none of them are of the form $c\dot q^2$ where $c\in\mathbb F_p$
and 
$q\in\mathbb F_p[x]$.
We also know that the polynomials $fh$ or $gh$
could only be of the form 
$cq^2$ (where $c\in\mathbb F_p$ and $q\in\mathbb F_p[x]$)
if $f$ is a constant multiple of $h$ (for $fh$) or if $g$ is a constant multiple of $h$ (for $gh$).
However, this contradicts \eqref{kep01}. Thus, by applying Lemma \ref{lem01}, we get that
\begin{align}
\left| \sum_{j=1}^{t} \left( \frac{f(a+jb)}{p} \right) \right|, \left| \sum_{j=1}^{t} \left( \frac{g(a+jb)}{p} \right) \right| &< kp^{1/2}(1+\log p) \notag\\
\left| \sum_{j=1}^{t} \left( \frac{f(a+jb)h(a+jb)}{p} \right) \right|,
\left| \sum_{j=1}^{t} \left( \frac{g(a+jb)h(a+jb)}{p} \right) \right| &< 2kp^{1/2}(1+\log p). \lb{qq3}
\end{align}
By \eqref{qq1}, \eqref{qq2} and \eqref{qq3}
\begin{align*}
  W(E_{f,g,h}) &< 3 kp^{1/2}(1+\log p) + 6k < 10kp^{1/2}\log p,
\end{align*}                 
from which \eqref{h1} follows.

\bigskip Let's move on to the proof of \eqref{h2}. This inequality trivially
holds if $\ell k\ge \dfrac{p^{1/2}}{13\log p}$. Thus, for the rest of the proof, we assume that
\begin{align}
  \ell k< \dfrac{p^{1/2}}{13\log p}. \lb{elk}
\end{align}
Consider numbers $M$ and
$0 \le d_1 < d_2 < \dots < d_{\ell} < M+d_{\ell} \le p$
for which
\begin{align}
C_{\ell}(E_{f,g,h})=
  \ab{\sum_{n=1}^{M}e_{n+d_{1}}...e_{n+d_{\ell}}}.
\lb{qq4}  
\end{align}  
Let $\mathcal{H}$ be the following set
\[
  \mathcal{H}=\{n : \exists d_{i} \text{ such that }
  p|f(n+d_{i})g(n+d_{i})h(n+d_{i})\}
\]
Then, 
$|\mathcal{H}|\le 3k\ell$.
By \eqref{form01} and \eqref{qq4} we get
\begin{align}
  C_{\ell}(E_{f,g,h})
  &= \Bigg| \frac{1}{2^{\ell}}\sum_{n=1}^{M} \prod_{j=1}^{\ell} \Bigg(
    \zj{\frac{f(n+d_{j})}{p}}
    +\zj{\frac{g(n+d_{j})}{p}} \notag\\
  &\qquad\qquad\qquad\qquad
    +\zj{\frac{f(n+d_j)h(n+d_j)}{p}}
    -\zj{\frac{g(n+d_j)h(n+d_j)}{p}}
    \Bigg) \Bigg| \notag \\
  &\quad + 2\sum_{n\in\mathcal{H}}1\notag\\
  &\le \Bigg| \frac{1}{2^{\ell}}\sum_{n=1}^{M} \prod_{j=1}^{\ell} \Bigg(
    \zj{\frac{f(n+d_{j})}{p}}
    +\zj{\frac{g(n+d_{j})}{p}} \notag\\
  &\qquad\qquad\qquad\qquad
    +\zj{\frac{f(n+d_j)h(n+d_j)}{p}}
    -\zj{\frac{g(n+d_j)h(n+d_j)}{p}}
    \Bigg) \Bigg| \notag \\
  &\quad + 6k\ell.
  \lb{qq5}
\end{align}
Expanding the product, we get a sum of $4^{\ell}$
Legendre symbol sums of the form:
\begin{align*}
  \sum_{n=1}^{M}\zj{\frac{f(n+\widetilde{d}_1)\dots f(n+\widetilde{d}_i)g(n+d_1')\dots g(n+d_v')
  h(n+d_1'')\dots h(n+d_z'')}{p}},
\end{align*}
where
\[
  i,v,z\le \ell,\ i+v+z \le 2\ell
\]  
and
$\widetilde{d}_r \ne \widetilde{d}_s, d_r' \ne d_s', d_r''\ne d_s''$.
Let $G$ be the following set of polynomials:
\begin{align*}
G = \{&F:\ \resizebox{0.87\linewidth}{!}{\mbox{$F=f(x+\widetilde{d}_1)\dots f(x+\widetilde{d}_i)g(x+d_1')\dots g(x+d_v')
  h(x+d_1'')\dots h(x+d_z'')$}},\\
  &\resizebox{0.91\linewidth}{!}{\mbox{$\text{where }
  i,v,z\le \ell,\ i+v+z \le 2\ell  \text{ and  } 
  \widetilde{d}_r \ne \widetilde{d}_s, d_r' \ne d_s', d_r''\ne d_s''$}}
  \}.
\end{align*}
Then by \eqref{qq5} 
\begin{align}
  C_\ell(E_{f,g,h}) \le \frac{4^\ell}{2^\ell} \max_{F \in G} \left| \sum_{n=1}^M
  \zj{\frac{F(n)}{p}} \right| + 6k\ell.\lb{bp}
\end{align}
In order to apply Lemma \ref{lem01}, we will use the following lemma.

\begin{lem}\lb{lemmac}
   Suppose that the conditions of Theorem \ref{thm01} hold. Then every
   polynomial $F\in G$ is not of the form $cq^2$ with
   $c\in\mathbb F_p$ and $q\in\mathbb F_p[x]$.
 \end{lem}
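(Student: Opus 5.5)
The plan is to follow the root-counting argument of Goubin, Mauduit and Sárközy \cite{GMS}. We show that $F$ has a root in $\overline{\mathbb F}_p$ of multiplicity exactly one. Such a root has odd multiplicity, which is impossible for $cq^2$. Write $F=\prod_{r\le i} f(x+\widetilde d_r)\prod_{r\le v} g(x+d'_r)\prod_{r\le z} h(x+d''_r)$. We split according to which of the three blocks is nonempty: first $i\ge1$, then $i=0,\ v\ge1$, then $i=v=0,\ z\ge 1$. The case $F$ constant does not arise, since every factor has degree at least $1$.

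\emph{Case $i\ge 1$.} By \eqref{kep01}, $f(x)\nmid\prod_{t}g(x+t)h(x+t)$. Since $f$ is squarefree, some root $\beta\in\overline{\mathbb F}_p$ of $f$ is not a root of any $g(x+t)h(x+t)$. In other words, $g(\beta+u)h(\beta+u)\ne0$ for every $u\in\mathbb F_p$. Hence no element of the coset $\beta+\mathbb F_p$ is a root of any factor $g(x+d'_r)$ or $h(x+d''_r)$, and such roots can only come from the $f$-factors. Put
\[
\mc S=\{u\in\mathbb F_p:\ f(\beta+u)=0\},\qquad \mc D=\{\widetilde d_1,\dots,\widetilde d_i\}.
\]
Then $0\in\mc S$, $|\mc S|\le k$, and $|\mc D|=i\le\ell$ with the $\widetilde d_r$ distinct. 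Because $f$ has no multiple roots, the multiplicity of $\beta+c$ as a root of $F$ equals the number of pairs $(s,d)\in\mc S\times\mc D$ with $s-d=c$. Note that $\mc S$ is a finite set of residues, not a single translate. This accounts for the possibility that a shift of $f$ shares roots with another shift of $f$.

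\emph{The combinatorial step.} It remains to find $c$ with exactly one representation $c=s-d$, $s\in\mc S$, $d\in\mc D$. This is exactly the combinatorial lemma of \cite{GMS} used in the proof of Theorem A. That lemma says: if $\mc A,\mc B\subseteq\mathbb Z_p$ with $|\mc A|\le k$, $|\mc B|\le\ell$, and one of (i) $\ell=2$, (ii) $\ell<p$ and $2$ a primitive root, (iii) $(4k)^\ell<p$ holds, then some element of $\mc A+\mc B$ has a unique representation. We apply it with $\mc A=\mc S$ and $\mc B=-\mc D$. Here it matters that $F$ involves at most $\ell$ distinct shifts of each of $f,g,h$, even though $i+v+z$ may be as large as $2\ell$. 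That is why the hypotheses of Theorem \ref{thm01} match those of Theorem A. I expect this step, together with checking that these hypotheses transfer, to be the main point of the proof; the rest is bookkeeping.

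\emph{Remaining cases.} If $i=0$ and $v\ge1$, we use $g(x)\nmid\prod_t h(x+t)$ from \eqref{kep01}. It yields a root $\beta$ of $g$ with $h(\beta+u)\ne0$ for all $u\in\mathbb F_p$, and we repeat the argument with $\mc S=\{u:\ g(\beta+u)=0\}$ and $\mc D=\{d'_r\}$. If $i=v=0$, then $F$ is a product of $z\ge1$ distinct shifts of the squarefree polynomial $h$ and contains no other factors. We take any root $\beta$ of $h$, $\mc S=\{u:\ h(\beta+u)=0\}$ and $\mc D=\{d''_r\}$, and the same lemma gives a simple root. In every case $F$ has a root of multiplicity one, so $F\ne cq^2$.
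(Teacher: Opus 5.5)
Your argument is correct in substance, but it takes a different route from the paper.

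\textbf{Comparison with the paper.} The paper never opens up Theorem A. In Case I it takes an irreducible factor $f_0$ of $f$ not dividing $\prod_t g(x+t)h(x+t)$ and lets $\overline f$ be the product of the irreducible factors of $f$ that are translates of $f_0$. It notes that $F=cq^2$ would force $\prod_r \overline f(x+\widetilde d_r)=cq^2$. It then gets a contradiction analytically: the Legendre sequence built from $\overline f$ would have $C_i\ge p-3ik$, against the bound $10ikp^{1/2}\log p$ of Theorem A combined with \eqref{elk}. Cases II and III are the same argument with $\overline g$ and with $h$. You make the same isolation at the level of roots: taking $f_0$ to be the minimal polynomial of your $\beta$, the points $\beta+u$ with $u\in\mathcal S$ are exactly the roots of $\overline f$ in $\beta+\mathbb F_p$. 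From there you go directly to the sumset lemma that underlies Theorem A and exhibit a root of odd multiplicity. The paper's version is shorter and uses Theorem A as a black box. Yours is purely algebraic, makes clear why only $i\le\ell$ (and not $i+v+z\le 2\ell$) matters, and avoids a loose step in the paper. That step bounds the full-period sum $\sum_{n=1}^{p}\widetilde e_{n+\widetilde d_1}\cdots\widetilde e_{n+\widetilde d_i}$ by $C_i(\widetilde E_p)$, although the definition of $C_i$ requires $M+d_i\le p$.

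\textbf{Two corrections to your combinatorial step.}

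First, under condition (ii) the lemma does not give a \emph{unique} representation, only an \emph{odd} number of them. This comes from the parity argument in $\mathbb F_2[x]/(x^p-1)$, using that $\Phi_p$ is irreducible over $\mathbb F_2$ when $2$ is a primitive root. For example, take $p=5$, $\mathcal A=\{0,1,2\}$, $\mathcal B=\{0,1,2,3\}$: every element of $\mathcal A+\mathcal B$ has $2$ or $3$ representations, so none is unique. Your conclusion only needs odd multiplicity, so state the step that way.

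Second, in cases (i) and (ii) the lemma also needs $\mathcal S\ne\mathbb F_p$, i.e.\ $|\mathcal S|<p$. This follows from $k<p$, which holds because \eqref{elk} is in force where the lemma is stated. The condition cannot be dropped: for $f=x^p-x-1$ one has $f(x+1)=f(x)$, so $f(x)f(x+1)=f(x)^2$ even though $f$ is squarefree. You should state explicitly where $k<p$ enters rather than treating the transfer of hypotheses as pure bookkeeping.
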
  

\noindent\textbf{Proof of Lemma \ref{lemmac}}.
Consider a polynomial $F\in G$ of the form
\begin{align}
F=f(x+\widetilde{d}_1)\dots f(x+\widetilde{d}_i)g(x+d_1')\dots g(x+d_v')
  h(x+d_1'')\dots h(x+d_z''), \lb{FF}
\end{align}
 where $i,v,z\le\ell,\ i+v+z \le 2\ell$  and 
 $\widetilde{d}_r \ne \widetilde{d}_s, d_r' \ne d_s', d_r''\ne d_s''$. We will distinguish three cases.

\bigskip\noindent\textbf{Case I:} $i \ge 1$, i.e. in \eqref{FF}
the polynomial $F$ contains a factor $f(x+\widetilde{d}_r)$.

\bigskip\noindent In this case, since $f(x) \nmid \prod_{i=1}^n g(x+t) h(x+t)$,
$f(x)$ has an irreducible factor that does not divide
$\prod_{i=1}^n g(x+t) h(x+t)$. Let this irreducible factor be $f_0(x)$. Thus,
\begin{align}
  f_0(x) \nmid \prod_{t=1}^p g(x+t) h(x+t) \lb{qq6}
\end{align}  
Let us introduce the following equivalence relation:
Two irreducible polynomials $\phi(x), \psi(x) \in \mathbb{F}_p(x)$ are
equivalent if there exists a $\tau \in \mathbb{F}_p$ such that
$\phi(x) = \psi(x+\tau)$.
By \eqref{qq6}, the polynomials $g(x+d_i')$ and $h(x+d_i'')$ have no
irreducible factor that is equivalent to $f_0(x)$.
Let $\overline{f}(x)$ denote the product of the irreducible factors
of $f(x)$ that are equivalent to $f_0(x)$.

Then the product of the irreducible factors of $f(x+\widetilde{d}_i)$ that are equivalent to $f_0(x)$ is $\displaystyle{\overline{f}}(x+\widetilde{d}_i)$.
Thus, the product of the irreducible factors of $f(x+\widetilde{d}_1)\dots f(x+\widetilde{d}_i)$ that are equivalent to $f_0(x)$ is $\displaystyle{\overline{f}}(x+\widetilde{d}_1)\displaystyle{\overline{f}}(x+\widetilde{d}_2)\dots\displaystyle{\overline{f}}(x+\widetilde{d}_i)$.
Suppose that, contrary to Lemma \ref{lemmac},
the polynomial
$F(x)=f(x+\widetilde{d}_1)\dots f(x+\widetilde{d}_i)g(x+d_1')\dots g(x+d_v')h(x+d_1'')\dots h(x+d_z'')$ is of the form $cq^2$. This implies that
$\displaystyle{\overline{f}}(x+\widetilde{d}_1)\displaystyle{\overline{f}}(x+\widetilde{d}_2)\dots\displaystyle{\overline{f}}(x+\widetilde{d}_i)$ is also of
that form.
Then, define the sequence $\widetilde{E}_p = \{\widetilde{e}_1,\dots,\widetilde{e}_p\}$ by the formula
\begin{align}
\widetilde{e_n}\stackrel{\text{def}}{=}\left\{
\begin{array}{ll}
\left(\dfrac{\displaystyle{\overline{f}(n)}}{p}\right) &\text{ if } (n,p)=1 \\
1 & \text{ if } (n,p)>1
\end{array}
\right.\lb{tis}
\end{align}
then
\begin{align*}
  C_{i} (\widetilde{E}_p)
  &\ge\ab{\sum_{n=1}^{p}\widetilde{e}_{n+\widetilde{d}_1}\cdots\widetilde{e}_{n+\widetilde{d}_i}}\\
  &\ge\left|\sum_{n=1}^{p}\zj{\frac{\displaystyle{\overline{f}}(n+\widetilde{d}_{1})\cdot \displaystyle{\overline{f}}(n+\widetilde{d}_{i})}{p}}\right|-2ik\\
  &\ge p-3ik
    \ge p-3\ell k
\end{align*}    
However, $\displaystyle{\overline{f}}$ is a polynomial that satisfies the conditions of Theorem A,
and thus
\begin{align*}
  C_{i}(\widetilde{E}_p)&\le 10ikp^{1/2}\log p\\
  &\le 10\ell k p^{1/2}\log p.
\end{align*}  
Thus,
\begin{align*}
p-3\ell k&\le 10 \ell kp^{1/2}\log p\\
p&\le 10\ell kp^{1/2}\log p+3\ell k<13\ell kp^{1/2}\log p\\
\frac{p^{1/2}}{13\log p}&<\ell k,
\end{align*}
which contradicts \eqref{elk}. Thus, in Case I we proved that
the polynomial $F$ is not of the form $c q^2$,
where $c\in\mathbb F_p$ and $q\in\mathbb F_p[x]$.
 
\bigskip\noindent\textbf{Case II:} The polynomial $F$
does not contain a factor of the form
$f(x+d_r)$ and $v\ge 1$, so $F$
is of the form $g(x+d_1')\dots g(x+d_v')h(x+d_1'')\dots h(x+d_z'')$.

\bigskip\noindent In this case, $g(x)$ has an irreducible factor
that is not a divisor of $\prod_{t=1}^{p} h(x+t)$.
Let's call this irreducible factor $g_0(x)$.
Let $\overline{g}(x)$ denote the product of the irreducible factors
of $g(x)$ that are equivalent to $g_0(x)$.
Then the product of the irreducible factors of $g(x+d'_i)$ that are equivalent to $g_0(x)$ is $\displaystyle{\overline{g}}(x+d'_i)$.
Thus, the product of the irreducible factors of $g(x+d'_1)\dots g(x+d'_i)$ that are equivalent to $g_0(x)$ is $\overline{g}(x+d'_1)\overline{g}(x+d'_2)\dots\overline{g}(x+d'_i)$.

Similarly to Case I,
if the polynomial $F(x)=g(x+d'_1)\dots g(x+d'_i)h(x+d_1'')\cdots h(x+d_z'')$
is of the form $cq^2(x)$, then the product $\overline{g}(x+d'_1)\overline{g}(x+d'_2)\dots\overline{g}(x+d'_i)$ must also be of that form. We define $\widetilde{E}_p = \{\widetilde{e}_1, \dots, \widetilde{e}_p\}$ using the formula \eqref{tis} (but with $\overline{g}$ instead of $\overline{f}$).
This would imply, similarly to Case I, that $p-3\ell k \le C_i(\widetilde{E}_p) < 10 \ell kp^{1/2}(\log p)$, which is a contradiction. Thus, in this case as well, the polynomial $F$ cannot be of the form $cq^2(x)$, where $c \in \mathbb{F}_p$ and $q \in \mathbb{F}_p[X]$.

\bigskip\noindent\textbf{Case III:} The polynomial $F$
contains only factors of the form $h(x+d_r)$, so $F$
is of the form $h(x+d_1'')\dots h(x+d_z'')$.

\bigskip\noindent We define $\widetilde{E}_p
= \{\widetilde{e}_1, \dots, \widetilde{e}_p\}$ using the formula \eqref{tis} (but with $h$ instead of $\displaystyle{\overline{f}}$),
If $h(x+d_1'')\dots h(x+d_z'')$ is of the form $cq^2$ (with $c\in\mathbb F_p$,
$q\in\mathbb F_p[x]$) then $p-3\ell k\le C_z\zj{\widetilde{E}_p}$.
Using Theorem A we immediately get that
$C_{z} (\widetilde{E}_p)\le 10 z kp^{1/2}\log p$. Thus,
$p-3\ell k< 10\ell kp^{1/2}\log p$, which is a contradiction. 
Thus, we have proven in every case that $F(x)$
is indeed not a polynomial of the form
$cq^2$ with $c\in\mathbb F_p,\ q\in\mathbb F_p[x]$.
This completes the proof of Lemma \ref{lemmac}.

\bigskip By using Lemma \ref{lem01}, we get
\begin{align*}
  \left| \sum_{n=1}^M \zj{\frac{F(n)}{p}} \right| \le 2\ell k \sqrt{p} (1+\log p).
\end{align*}  
Thus, by \eqref{bp}
\begin{align*}
C_{\ell}(E_f,g,h)) &\le 2^{\ell + 1} \ell k p^{1/2} (1+\log p) + 6k\ell\\
 &< 2^{\ell+3} \ell k p^{1/2} \log p,
\end{align*}
which completes the proof of the theorem.

\bigskip\noindent\textbf{Proof of Theorem \ref{thm02}.}
We want to apply Theorem \ref{thm01} to the polynomials given
in \eqref{xy1}.
First, let's check if the conditions of \eqref{kep01} in Theorem \ref{thm01} are satisfied.
Since $\mc{A} \not\subseteq \mc{B}\cup\mc{C}$, there exists an
$n_1 \in \mc A$
such that $n_1 \notin \mc B \cup \mc C$. Then, we have
\begin{align}
  x^2 - n_1 \mid f(x) \quad \text{but} \quad x^2 - n_1 \nmid \prod_{t=1}^{T} g(x+t) h(x+t),\lb{xy4}
\end{align}
since the polynomial $x^2-n_1$ is irreducible and the irreducible factors
of the polynomial $g(x+t)h(x+t)$ are of the form $(x+t)^2-n$,
where $n \in \mc B \cup \mc C$.
Clearly, none of the irreducible polynomials $(x+t)^2-n=x^2-2tx+t^2-n$
are identical to the irreducible polynomial $x^2-n_1$.
Thus,
condition \eqref{kep01} indeed holds. Using Theorem \ref{thm01},
the result \eqref{xy2} follows immediately.

\bigskip To prove \eqref{xy3}, we will follow the notation and proof of
Theorem \ref{thm01}.
Let $G$ be the following set of polynomials:
\begin{align*}
G = \{&F:\ \resizebox{0.87\linewidth}{!}{\mbox{$F=f(x+\widetilde{d_1})\dots f(x+\widetilde{d_i})g(x+d_1')\dots g(x+d_v')
  h(x+d_1'')\dots h(x+d_z'')$}},\\
  &\resizebox{0.91\linewidth}{!}{\mbox{$\text{ where }
  i,v,z\le \ell,\ i+v+z \le 2\ell  \text{ and  } \widetilde{d_r} \ne \widetilde{d_s},
  d_r'\ne d_s', d_r''\ne d_s''$}}
  \}.
\end{align*}
Then by \eqref{qq5}, 
\begin{align}
  C_\ell(E_{f,g,h}) \le \frac{4^\ell}{2^\ell} \max_{F \in G} \left| \sum_{n=1}^M
  \zj{\frac{F(n)}{P}} \right| + 6k\ell.\lb{bp2}
\end{align}
In order to apply Lemma \ref{lem01}, we must show that for any $F\in G$,
the polynomial $F$ is not of the form $c q^2$,
where $c\in\mathbb F_p$ and $q\in\mathbb F_p[x]$. To do this,
let's consider a polynomial $F\in G$ of the form
\begin{align}
F=f(x+\widetilde{d_1})\dots f(x+\widetilde{d_i})g(x+d_1')\dots g(x+d_v')
  h(x+d_1'')\dots h(x+d_z''), \lb{FF2}
\end{align}
 where $i,v,z\le\ell,\ i+v+z \le 2\ell$  and 
 $\widetilde{d_r} \ne \widetilde{d_s}, d_r' \ne d_s', d_r''\ne d_s''$.
Next we distinguish three cases. 

\bigskip\noindent\textbf{Case I:} $i \ge 1$, i.e. in \eqref{FF}
the polynomial $F$ contains a factor $f(x+\widetilde{d_r})$.

\bigskip\noindent\textbf{Case II:} The polynomial $F$
does not contain factor $f(x+\widetilde{d_r})$ and $v\ge 1$, so $F$
is of the form $g(x+d_1')\dots g(x+d_v')h(x+d_1'')\dots h(x+d_z'')$.

\bigskip\noindent\textbf{Case III:} The polynomial $F$
contains only factors of the form $h(x+d_r)$, so $F$
is of the form $h(x+d_1'')\dots h(x+d_z'')$.

\bigskip
First, we prove that in Case I $F$ is not of the form $cq^2$,
where $c \in \mathbb{F}_p$ and $q \in \mathbb{F}_p[X]$. Let
$f_0(x)=x^2-n_1$, where $n_1\in\mc A$ but $n_1\notin\mc B\cup\mc C$.
By \eqref{xy4}, $g(x+d_1')\dots g(x+d_v') h(x+d_1'')\dots h(x+d_z'')$
has no irreducible factor equivalent to $f_0(x)$.
It's also clear that $f_0(x)$
is not equivalent to any other
irreducible factor of
$f(x)$. Then the irreducible factor of $f(x+\widetilde{d_r})$
that is equivalent to
$f_0(x)$ is $f_0(x+\widetilde{d_r})$.
Thus, the product of the irreducible factors of
$f(x+\widetilde{d_1})\dots f(x+\widetilde{d_i})$ that are equivalent
to $f_0(x)$ is
\begin{align}
  f_0(x+\widetilde{d_1})&f_0(x+\widetilde{d_2})\dots f_0(x+\widetilde{d_i})
                          \notag\\
                        &=\zj{(x+\widetilde{d_1})^2-n_1}\zj{(x+\widetilde{d_2})^2-n_1}\cdots
                          \zj{(x+\widetilde{d_i})^2-n_1}.
  \lb{tq}
\end{align}
Thus, if the polynomial $F(x)=f(x+\widetilde{d_1})\dots f(x+\widetilde{d_i})
g(x+d_1')\dots g(x+d_v')h(x+d_1'')\dots h(x+d_z'')$
is of the form $cq^2(x)$, 
then the polynomial in \eqref{tq} must also be of that form.
But this contradicts the fact that the irreducible polynomials
$(x+\widetilde{d_1})^2-n_1,\ (x+\widetilde{d_2})^2-n_1,\dots,
(x+\widetilde{d_i})^2-n_1$ are distinct.
This completes the proof for Case I, showing that
$F$ is not of the form $cq^2$, where $c \in \mathbb{F}_p$ and $q \in \mathbb{F}_p[X]$.

\bigskip Cases II and III can be handled similarly to Case I.
Thus, in all the three cases, we get that $F\in G$ is not of the form
$cq^2$ with $c\in\mathbb F_p$, $q\in\mathbb F_p[x]$.
Thus, we can apply
Lemma \ref{lem01}, from which
\[
\ab{\sum_{n=1}^{M}\zj{\dfrac{F(n)}{p}}}\le 2\ell kp^{1/2}(1+\log p)
\]
Thus, by \eqref{bp2}
\begin{align*}
C_{\ell}(E_{f,g,h}) &\le 2^{\ell + 1} \ell k p^{1/2} (1+\log p) + 6k\ell\\
 &< 2^{\ell+3} \ell k p^{1/2} \log p,
\end{align*}
which completes the proof of the theorem.

\bigskip\noindent\textbf{Proof of Theorem \ref{thm03}.}
Based on formula \eqref{tq1}, the elements of the sequence
$E_N=\{e_1,\dots,e_N\}$ satisfy
\begin{align}
  e_n&=\dfrac{1}{2}\zj{(1+h_n)f_n+(1-h_n)g_n}\notag\\
     &=\dfrac{1}{2}\zj{f_n+g_n+f_nh_n-g_nh_n}.\lb{t301}
\end{align}  

Consider numbers $M$ and
$0 \le d_1 < d_2 < \dots < d_{\ell} < M+d_{\ell} \le p$
for which
\begin{align}
C_{\ell}(E_{f,g,h})=
  \ab{\sum_{n=1}^{M}e_{n+d_{1}}\dots e_{n+d_{\ell}}}
\lb{t302}  
\end{align}  
By \eqref{t301} and \eqref{t302}
\begin{align}
  C_{\ell}(E_N)=
  \frac{1}{2^{\ell}}\ab{\sum_{n=1}^{M}
  \prod_{j=1}^{\ell}\zj{f_{n+d_j}+g_{n+d_j}+f_{n+d_j}h_{n+d_j}
  -g_{n+d_j}h_{n+d_j}}}.\lb{t303}
\end{align}
Expanding the product yields a sum of $4^{\ell}$
terms, each of the form:
\begin{align*}
  \sum_{n=1}^{M}f_{n+\widetilde{d}_1}\cdots f_{n+\widetilde{d}_i}
  g_{n+d_i'}\cdots g_{n+d'_v}h_{n+d''_1}\cdots h_{n+d''_{\ell}},
\end{align*}
where
\[
  i,v,z\le \ell,\ i+v+z \le 2\ell
\]  
and
$\widetilde{d}_r \ne \widetilde{d}_s, d_r' \ne d_s', d_r''\ne d_s''$. 
By the definition of the cross-correlation measure,
each of these sums has an absolute value
$\le \max_{\ell \le k\le 2\ell} \Phi_k(\mc F)$.
Thus, by \eqref{t303} we get
\begin{align*}
  C_{\ell} (E_N) \le \frac{4^{\ell}}{2^{\ell}}
  \max_{\ell \le k\le 2\ell} \Phi_k(\mc F)\\
  =2^{\ell}  \max_{\ell \le k\le 2\ell} \Phi_k(\mc F),
\end{align*}
which completes the proof.


\bigskip

\noindent Katalin Gyarmati\\
E\"otv\"os Lor\'and University, Institute of Mathematics, \\
H-1117 Budapest P\'azm\'any P\'eter s\'et\'any 1/C, Hungary    \\
Email: katalin.gyarmati@gmail.com

\bigskip

\noindent K\'aroly M\"ullner\\
E\"otv\"os Lor\'and University, Institute of Mathematics, \\
H-1117 Budapest P\'azm\'any P\'eter s\'et\'any 1/C, Hungary    \\
Email: mullni@student.elte.hu

\end{document}